\documentclass[10pt,reqno]{amsart}
\usepackage{amssymb}
\usepackage{amsmath, amssymb, amsthm, color}
\usepackage{extarrows}
\usepackage{paralist}
  \usepackage{graphics, graphicx} 
  \usepackage{epsfig} 
  \usepackage{psfrag,epstopdf}
\usepackage{subeqnarray}
\usepackage{cases}
\usepackage{cite}
\usepackage{bibentry}
\usepackage{setspace}

\newtheorem{theorem}{Theorem}[section]
\newtheorem{lemma}[theorem]{Lemma}

\newtheorem*{thma}{Theorem A}

\theoremstyle{definition}

\newtheorem{remark}[theorem]{Remark}

\newcommand{\rone}{\mathbb{R}}

\newcommand{\calY}{{\mathcal Y}}
\newcommand{\calZ}{{\mathcal Z}}

\newcommand{\tilv}{{\tilde v}}

\newcommand{\f}{\frac}

\newcommand{\pa}{\partial}

\newcommand{\qand}{{\quad \text{and} \quad}}

\newcommand{\qin}{{\quad \text{in} \quad}}
\newcommand{\qon}{{\quad \text{on} \quad}}

\newcommand{\vae}{{\varepsilon}}
\newcommand{\eps}{\varepsilon}

\newcommand{\la}{{\lambda}}

\newcommand{\leps}{\lambda_\varepsilon}
\newcommand{\psiuv}{\psi(u_0+v)}

\begin{document}

\title[Generalized Abelian Chern-Simons-Higgs model on a torus]
{Existence of multiple   solutions for the generalized Abelian Chern-Simons-Higgs model on a torus}
 \author{Jongmin Han and Kyungwoo Song$^\dagger$  }
 \date{}

\address{Department of Mathematics, College of Sciences, Kyung Hee University, Seoul 02447,  Korea}

\email{jmhan@khu.ac.kr, kyusong@khu.ac.kr}



\maketitle
\begin{abstract}
We construct multiple solutions of the generalized self-dual  Abelian Chern-Simons-Higgs equation 
in a two dimensional flat torus by the topological degree method. 
\end{abstract}


\section{Introduction}

Let $\Omega$ be a two dimensional flat torus.
In this paper, we are concerned with the generalized self-dual Abelian Chern-Simons-Higgs equation on $\Omega$:
\begin{align}
\label{eq:gcsh}
(1-e^U) \Delta U -  e^U |\nabla U|^2  =  - \frac{1}{\eps^2} e^U (e^U-1)^2 + 4\pi \sum^{d}_{j=1}n_j \delta_{p_j} 
\end{align}
Here, $\Gamma=\{p_1,\cdots,p_d\} \subset \Omega$, $n_j$ is a positive integer, and $\delta_p$ is the Dirac mass at $p$.
The equation \eqref{eq:gcsh} arises from \cite{BCT} 
which  generalizes some physical aspects of the Abelian Chern-Simons-Higgs model \cite{HKP,JP}.
By the maximum principle, if $U$ is a solution of \eqref{eq:gcsh}, then
\begin{equation}
\label{eq:U<0}
U\le 0 \qon \Omega \setminus \Gamma .
\end{equation}
In fact, if $x_0$ is a positive maximum point of $U$, then $x_0 \ne p_j$ and
\[ 0< (1-e^{U(x_0)} ) \Delta U (x_0)  +  \frac{1}{\eps^2} e^{U(x_0)} (e^{U(x_0)}-1)^2 = e^{U(x_0)} |\nabla U(x_0)|^2 =0,
\]
which is a contradiction.
If we set
 \begin{align}
 \label{eq:rho}
   u=\varrho(U)=1+U-e^U,
 \end{align}
 then $\varrho:(-\infty, 0] \to (-\infty,0]$ is  increasing and thus has the inverse $U=\psi(u)$.
 So, we define a function $\psi:\rone \to \rone$ by
 \begin{equation}
 \label{eq:psi def}
 \psi(u) =\left\{ 
 \begin{aligned} 
 & \varrho^{-1} (u) && \text{for} \quad u\in (-\infty , 0],\\
 &0 && \text{for} \quad  u \in (0,\infty) .
 \end{aligned} \right.
 \end{equation} 
Then, we can rewrite \eqref{eq:gcsh} as
\begin{align}
\label{eq:gcsh u}
\Delta u = - \frac{1}{\eps^2} e^{\psi(u)} \big( e^{\psi(u)} -1 \big)^2 + 4\pi \sum^{d}_{j=1}n_j \delta_{p_j}   \qin \Omega.
\end{align}

Let us define an auxiliary function
\begin{align*}
  u_0 (x) = - 4\pi \sum^{d}_{j=1} n_j G(x, p_j),
\end{align*} 
where  $G$  is the Green function such that
 \[ - \Delta_x G(x,y) = \delta_y (x) - \f{1}{|\Omega|} \qand  \int_\Omega G(x,y) dy =0. \]
 Here $|\Omega|$ denotes the area of $\Omega$.
Then $u_0$ satisfies
 \[ \Delta u_0 = 4\pi \sum^{d}_{j=1}n_j \delta_{p_j} - \f{4\pi N}{|\Omega|}  \qand  \int_\Omega u_0 =0. \]
If we set $v:=u-u_0$, then
 \begin{align}
 \label{eq:v}
    \Delta v = -\f{1}{\varepsilon^2} e^{\psi(u_0+v)}(e^{\psi(u_0+v)}-1)^2 + \f{4\pi N}{|\Omega|}  \qin \Omega.
 \end{align}
Since $\psi(t)=0$ for $t \ge 0$ and $e^t(e^t-1)^2 \le 4/27$ for $t \le 0$,  we get
\begin{align*}
    4\pi N =  \f{1}{\varepsilon^2}  \int_\Omega e^{\psi(u_0+v)}(e^{\psi(u_0+v)}-1)^2
               \leq  \f{4}{27 \varepsilon^2} |\Omega| .
\end{align*} 
Thus, we obtain that a necessary condition for the existence of \eqref{eq:v} is 
 \begin{equation}
 \label{eq:eps *}
  0< \eps < \eps_*:= \sqrt{\f{|\Omega|}{27\pi N}}. 
  \end{equation}
 In view of \eqref{eq:U<0}, we seek for a solution $v$ of  \eqref{eq:v} so that   $u_0+v \le 0$ on
$ \Omega\setminus \Gamma$.
Regarding the equation \eqref{eq:v}, the following theorem is known from \cite{xh}.


\begin{thma}

\begin{itemize}
\item[{\rm (i)}]
There exists $\eps_c \in (0, \eps_*)$ such that \eqref{eq:v} admits  a maximal solution $v_{1, \eps}$  to
 \eqref{eq:v} for $0<\eps < \eps_c$.
  Moreover, $v_{1, \eps}< -u_0$. 

\item[{\rm (ii)}]
For $0< \eps_1 < \eps_2 < \eps_c$, it holds that $v_{1, \eps_2}<v_{1, \eps_1}$,  and the limit
 \[ \phi := \inf_{ \eps \nearrow \eps_c} v_{1, \eps}  \]
 is a solution of \eqref{eq:v} for $\vae=\vae_c$.
\end{itemize}
\end{thma}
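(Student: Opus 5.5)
The plan is the classical sub/super-solution scheme of Caffarelli--Yang and Tarantello, adapted to the nonlinearity $f(t):=e^{\psi(t)}(e^{\psi(t)}-1)^2$. We will use the following properties of $f$. It vanishes for $t\ge 0$. It is bounded by $4/27$. Since $\psi'(u)=1/(1-e^{\psi})>0$ for $u<0$, the map $t\mapsto f(t)$ is increasing for $t$ near $0^-$ and decreasing for very negative $t$. Write \eqref{eq:v} as
\[
\Delta v = -\eps^{-2} f(u_0+v)+ \tfrac{4\pi N}{|\Omega|}.
\]
Then $\bar v=-u_0$ is a supersolution on $\Omega\setminus\Gamma$ in the weak sense, since $\Delta(-u_0)\le \tfrac{4\pi N}{|\Omega|}$ away from the Dirac masses and the Dirac masses carry the right sign. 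To obtain a maximal solution we run a monotone iteration downward from $\bar v$:
\[
(\Delta - K)v_{n+1} = -\eps^{-2} f(u_0+v_n) - K v_n + \tfrac{4\pi N}{|\Omega|},
\]
with $K$ large enough that $t\mapsto \eps^{-2}f(t)+Kt$ is increasing. The maximum principle gives $v_{n}$ decreasing with $v_n\le -u_0$.

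The central step is to show that the iteration stays bounded below. For this we need, for small $\eps$, a subsolution $\underline v\le -u_0$. As in Caffarelli--Yang, we regularize near the vortices. Take $u_\delta$ smooth, equal to $u_0$ outside small balls $B_\delta(p_j)$, and then look at $w=\eps^{-1}$-scaled corrections. Equivalently, we will show that the set
\[
\Lambda=\{\eps\in(0,\eps_*):\eqref{eq:v}\text{ has a solution}\}
\]
is nonempty and that $\eps'\in\Lambda$, $\eps<\eps'$ imply $\eps\in\Lambda$. The second property is immediate: a solution for $\eps'$ is a subsolution for $\eps$ because $-\eps^{-2}f\le -\eps'^{-2}f$. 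Nonemptiness is the main obstacle. We will get it from a variational or explicit construction: solve $\Delta w = \tfrac{4\pi N}{|\Omega|}-\tfrac{4\pi N}{|\Omega|}\chi$ with $\chi$ a cutoff that is positive where $u_0$ is not too negative, and then choose $\eps$ small so that $\eps^{-2}f(u_0+w-c)$ dominates on the support of $\chi$. Given a subsolution $\underline v\le -u_0$, the iteration converges in $W^{2,p}$ by elliptic estimates to a solution $v_{1,\eps}$. This solution lies above every solution, since every solution is below $-u_0$ by \eqref{eq:U<0} and the iteration preserves that ordering. Hence $v_{1,\eps}$ is maximal. Strict inequality $v_{1,\eps}<-u_0$ follows from the strong maximum principle: $-u_0-v_{1,\eps}\ge0$ is a supersolution of a linear equation, and it is not identically zero because $4\pi N>0$ forces $f(u_0+v)\not\equiv 0$. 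Setting $\eps_c=\sup\Lambda$, the necessary condition \eqref{eq:eps *} gives $\eps_c\le\eps_*$.

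For (ii), take $\eps_1<\eps_2$. Then $v_{1,\eps_2}$ is a subsolution for $\eps_1$, so maximality gives $v_{1,\eps_2}\le v_{1,\eps_1}$, and the strong maximum principle upgrades this to strict inequality. For the limit, the family is monotone as $\eps\nearrow\eps_c$, so it suffices to bound it below uniformly. The right-hand side of the equation is bounded in $L^\infty$, so $v-\bar v$ is bounded, where $\bar v=\tfrac1{|\Omega|}\int_\Omega v$ denotes the mean. The only danger is that $\bar v\to-\infty$. In that case $u_0+v\to-\infty$ uniformly away from small sets, so $f(u_0+v)\to0$, which contradicts $\int_\Omega f(u_0+v)=4\pi N\eps^2>0$. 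With the uniform bound in hand, elliptic estimates and passage to the limit show that $\phi$ solves \eqref{eq:v} at $\eps_c$. Finally, to get $\eps_c<\eps_*$ strictly, we note that equality in the integral bound would force $f(u_0+v)\equiv4/27$, which is impossible near $\Gamma$ where $u_0\to-\infty$.
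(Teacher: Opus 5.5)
Your proposal is correct and follows the route the paper relies on. The paper gives no proof of Theorem A; it quotes it from X.~Han \cite{xh}, who uses this same Caffarelli--Yang sub/supersolution scheme: supersolution $-u_0$, monotone iteration, a subsolution for small $\varepsilon$, $\varepsilon_c=\sup\Lambda$, and the mean-value blow-down argument (the one the paper reuses in Lemma~\ref{lem:H}) to keep $v_{1,\varepsilon}$ bounded below as $\varepsilon\nearrow\varepsilon_c$.

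Two slips need fixing:
\begin{itemize}
\item Since $f'(t)=e^{\psi(t)}(1-3e^{\psi(t)})$ for $t<0$, $f$ is decreasing near $0^-$ and increasing for very negative $t$, the reverse of what you wrote. This is harmless, because you only use $|f'|\le 2$ to choose $K\ge 2/\varepsilon^2$.
\item In the subsolution step, $\chi\ge 0$ must vanish near $\Gamma$ \emph{and} have mean $1$, so it cannot be a $[0,1]$-valued cutoff. Otherwise $\Delta w=\frac{4\pi N}{|\Omega|}(1-\chi)$ has no solution on the torus.
\end{itemize}
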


In Theorem A, the maximality means that if $v$ is any solution of \eqref{eq:v}, then $v<v_{1, \eps}$.
Based on the results above, we construct another solution of \eqref{eq:v}.
The main result of this paper is the following theorem.

\begin{theorem}
\label{thm:main}
For each $0<\eps < \eps_c$, there exists a second solution of  \eqref{eq:v}.
\end{theorem}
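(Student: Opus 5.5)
The plan is to obtain the second solution by Leray--Schauder degree. The maximal solution $v_{1,\eps}$ of Theorem A should be isolated with local index $1$. On the other hand, the total degree of the problem over a large bounded set is $0$, because no solutions exist for $\eps\ge\eps_*$. Since $1\ne0$, another solution must exist. We write \eqref{eq:v} as a fixed point problem. Set $f(t)=e^{\psi(t)}(e^{\psi(t)}-1)^2$. We work in $C(\Omega)$, or in $H^1(\Omega)$, using the decomposition $v=\bar v+w$ with $\int_\Omega w=0$. Define
\[ T_\eps(v)=v+\big(-\Delta+1\big)^{-1}\Big[\tfrac1{\eps^2}f(u_0+v)-\tfrac{4\pi N}{|\Omega|}\Big]\cdot(-1)+\dots,\]
or, equivalently, use $K_\eps(v)=(-\Delta+1)^{-1}\big(v+\frac1{\eps^2}f(u_0+v)-\frac{4\pi N}{|\Omega|}\big)$. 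This map is compact because $0\le f\le 4/27$ and $\psi$ is continuous. Fixed points of $K_\eps$ are exactly the solutions of \eqref{eq:v}.

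\textbf{Step 1: a priori bounds.} For $\eps$ in a compact subinterval $[\eps_0,\eps_1]\subset(0,\infty)$, every solution satisfies $\|v-\bar v\|_{C^{1,\alpha}}\le C$, since the right-hand side is bounded by $4/(27\eps^2)+4\pi N/|\Omega|$. Bounding the mean $\bar v$ is the delicate point. An upper bound comes from the maximum principle: $v\le -u_0$ near the maximum, because otherwise $f=0$ everywhere and the integral identity fails. A lower bound comes from the constraint $4\pi N\eps^2=\int_\Omega f(u_0+v)$. If $\bar v\to-\infty$, then $u_0+v\to-\infty$ uniformly away from nothing, so $f(u_0+v)\to0$, which is a contradiction. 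Hence all solutions lie in a fixed ball $B_R$ for $\eps\in[\eps_0,\eps_1]$, uniformly.

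\textbf{Step 2: total degree is zero.} Take $\eps_1>\eps_*$. For $\eps\in(\eps_*,\eps_1]$ there are no solutions by \eqref{eq:eps *}. The homotopy in $\eps$ from the given $\eps<\eps_c$ to $\eps_1$ has no fixed points on $\partial B_R$, by Step 1. Therefore $\deg(I-K_\eps,B_R,0)=0$.

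\textbf{Step 3: the local index of $v_{1,\eps}$ is $1$.} This is the main obstacle. I would first try the sub/super-solution degree argument. For $\eps<\eps'<\eps_c$, Theorem A(ii) gives $v_{1,\eps'}<v_{1,\eps}$. The function $v_{1,\eps'}$ is a strict subsolution at $\eps$, since $\frac1{\eps^2}f>\frac1{\eps'^2}f$ wherever $f>0$. A constant $M$, large, is a supersolution, because $u_0+M\ge0$ off small sets near $\Gamma$... but strictly speaking $u_0\to-\infty$ at $\Gamma$, so instead use $-u_0+$const, or simply $v$ with $f=0$ where $u_0+v\ge0$. Then consider the order interval $\mathcal O=\{v_{1,\eps'}<v<S\}$, open in $C(\Omega)$. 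Using a monotone iteration operator with a large shift $\lambda$ (so that $t\mapsto \lambda t+\frac1{\eps^2}f(u_0+t)$ is increasing, which requires a Lipschitz bound on $f\circ\psi$; $\psi'$ is singular only at $u=0$, so I would regularize there or use that $\psi'(u)=1/(1-e^{\psi})$ is bounded on the relevant range), the operator maps $\overline{\mathcal O}$ into $\mathcal O$ by the strong maximum principle. Hence its degree on $\mathcal O$ is $1$.

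If $v_{1,\eps}$ were the only solution, then every solution lies in $\mathcal O$, since all solutions are below $v_{1,\eps}$ and all solutions in the relevant range are above the subsolution. This comparison needs care. Instead, I would argue directly: if $v_{1,\eps}$ is the unique solution, then $\deg(I-K_\eps,\mathcal O,0)=1$, by excision and the order-interval argument, while $\deg(I-K_\eps,B_R,0)=0$. The remaining task is to make this excision legitimate. If there were no other solution in $B_R\setminus\overline{\mathcal O}$, the two degrees would coincide, which is a contradiction. Hence a second solution exists, possibly outside $\mathcal O$. The technical heart is verifying strict invariance of the order interval, and the Lipschitz and monotonicity issue of $f\circ\psi$ near $u=0$ and near $\Gamma$.
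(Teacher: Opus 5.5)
Your strategy is the paper's: total degree $0$ on a large ball by continuing in $\eps$ past $\eps_*$, degree $1$ on the order interval between a strict subsolution and $-u_0$, and excision to produce a solution outside that interval. The paper's homotopy is exactly your invariance argument. By linearity, $K_t(v)=t\,T_{\eps_0}(v)+(1-t)v_{1,\eps_0}$, and Lemma~\ref{lem:J not zero} is precisely the strict-invariance check. The only substantive difference is your lower barrier $v_{1,\eps'}$, $\eps<\eps'<\eps_c$, in place of $\phi$. It works equally well and does not need the limit solution at $\eps_c$. The strong maximum principle you invoke is also what handles touching points on $\Gamma$, where $e^{\psi}(e^{\psi}-1)^2$ vanishes and the subsolution inequality is not strict.

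As written, however, Step 3 has one missing step and two incorrect side claims.

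First, the missing step concerns the upper barrier. A large constant is \emph{not} a supersolution: since $\eps<\eps_*$ one has $4\pi N\eps^2/|\Omega|<4/27$, and $u_0+M$ passes through the level where $e^{\psi}(e^{\psi}-1)^2=4/27$. So you must take $S=-u_0$ (or $-u_0+$const). Then $\mathcal O$ is unbounded in $C(\Omega)$ because $-u_0=+\infty$ on $\Gamma$, and ``$\overline{\mathcal O}$ is mapped into $\mathcal O$, hence degree $1$'' is not yet meaningful. You must work on $B_R\cap\mathcal O$ and show that fixed points of $v\mapsto tT(v)+(1-t)v_{1,\eps}$ in $\mathcal O$ are bounded uniformly in $t$; this is the content of the paper's Lemma~\ref{lem:deg}. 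In your setting it is quick. For $v\in\overline{\mathcal O}$ one has $|\lambda v|\le\lambda(|u_0|+\|v_{1,\eps'}\|_\infty)\in L^p$ for every $p<\infty$. Hence $T(\overline{\mathcal O})$ is bounded in $W^{2,p}$, and therefore in $C(\Omega)$.

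Second, $\psi'$ is not bounded on the relevant range: $\psi'(u)=1/(1-e^{\psi(u)})\to\infty$ as $u\to0^-$, which is exactly the upper barrier. No regularization is needed, because the singularity cancels in the composite. Indeed $\frac{d}{du}\big[e^{\psi}(e^{\psi}-1)^2\big]=-e^{\psi}(3e^{\psi}-1)$, which has absolute value at most $2$. So any shift $\lambda\ge 2/\eps^2$ makes $t\mapsto\lambda t+\eps^{-2}e^{\psi(u_0+t)}(e^{\psi(u_0+t)}-1)^2$ nondecreasing; compare \eqref{eq:la-vae ineq}.

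Finally, use this same shifted operator in Step 2, or note that changing the shift does not change the fixed-point set, so the degrees agree. The local-index claim for $v_{1,\eps}$ and the uniqueness detour are unnecessary: excision only needs that no solution lies on $\partial\mathcal O$ (from strict invariance) and that all solutions lie in $B_R$. In Step 1, the upper bound on $\bar v$ is simply $u_0+v\le0$, which follows from the maximum principle, integrated over $\Omega$.
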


\begin{remark}
We explain the main contribution of this paper.
Consider the classical Abelian Chern-Simons-Higgs equation 
\begin{align}
\label{eq:csh-orig-V}
 \Delta V    =    \frac{1}{\eps^2} e^{u_0+V}(e^{u_0+V}-1) +\f{4\pi N}{|\Omega|} 
\end{align}
for anyonic planar physics  \cite{HKP,JP}. 
There are two important results for the existence of solutions of \eqref{eq:csh-orig-V}.
First, Caffarelli and Yang proved in \cite{cy} the existence of maximal solution $V_{1, \vae}$ 
of \eqref{eq:csh-orig-V} for $0<\vae<\hat{\vae}_c < \hat{\vae}_*:= \sqrt{ |\Omega|/(16\pi N)}$.
Second, Tarantello obtained in \cite{ta} a second solution by using the mountain pass structure 
of the  functional associated with \eqref{eq:csh-orig-V}.
See also \cite{CI00,CFL,Ch05JMP,CK08,LY13,XH13,ta07,ta-book} for other results.

For our equation \eqref{eq:v},  X. Han  proved  Theorem A as in \cite{cy} 
by the super- and sub-solution method.
Then, one may ask whether  a second solution exists as in  \cite{ta}.
In this paper, we obtain it via the Leray-Schauder degree theory   based on the method in \cite{HaLi,HS2}.
We emphasize that our idea is different from the method used in   \cite{ta}.
We believe that the method in this paper can be applicable to other types of self-dual Chern-Simons-Higgs equations.
\qed
\end{remark}


\section{Proof of  Theorem \ref{thm:main}}

 Let us fix $\vae_0 \in (0, \eps_c)$.
 We will show that $\eqref{eq:v}_{\vae=\vae_0}$ has a solution   which is different from $v_{1, \vae_0}$.
 We will employ the Leray-Schauder degree theory as in \cite{HaLi,HS2}.
 We choose $\eps_1$ so that $0< \vae_0 < \eps_c < \eps_* < \vae_1$.
  Then,  $\eqref{eq:v}_{\vae=\vae_1}$ has no solution.

 Define a space $\calZ = C(\Omega)$
 and take a constant $\leps > 4/\vae^2$.
   Let us denote
\begin{align*}
g(\eps, v)&:= - \f{1}{\varepsilon^2} e^{\psiuv}(e^{\psiuv}-1)^2,\\
f(\eps, v)&:= g(\eps, v) - \leps v + \f{4\pi N}{|\Omega|}.
\end{align*}
We denote $g_v= \pa g /\pa v$ and $f_v = \pa f / \pa v$.
Using the relation $\psi = \varrho^{-1}$ in \eqref{eq:rho}, we obtain
  \begin{align*}
     g_v (\eps, v) & = -\f{1}{\eps^2} \psi'(u_0+v) e^{\psiuv} (e^{\psiuv}-1)(3e^{\psiuv}-1) \\
                          & =  \f{1}{\eps^2} e^{\psiuv}(3e^{\psiuv}-1).
  \end{align*}
  Thus, when $u_0+v\le 0$, 
  \begin{equation}
  \label{eq:la-vae ineq}
  | g_v (\eps, v) | \le \frac{4}{\vae^2} <\la_\vae.
  \end{equation}

Now, \eqref{eq:gcsh u} is rewritten as
\[  \Delta v - \leps v =f(\eps, v)  .
\]
For each $v \in \calZ$, there exists a unique $w \in C^2(\Omega)$ such that
  $\Delta w - \leps w = f(\eps, v)$.
So, we can define   an operator $T_\eps : \calZ \to \calZ$ by $ T_\eps (v) = (\Delta - \leps)^{-1} f(\eps, v)$.
Since $C^2(\Omega) \Subset   C(\Omega)$, it is clear that this operator is compact.
 Let us also introduce a homotopy $H: [\eps_0, \eps_1] \times \calZ \to \calZ$ defined by $H(\eps, v)=v - T_\eps (v)$. 

 \begin{lemma}
 \label{lem:H}
   If $H(\eps, v)=0$ for $(\eps, v) \in [\vae_0, \vae_1]\times \calZ$, then $\| v \|_\calZ  < R_0$ for some $R_0>0$.
 \end{lemma}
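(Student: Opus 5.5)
A zero of $H(\eps,\cdot)$ satisfies $v=T_\eps(v)$, that is, $\Delta v-\leps v=f(\eps,v)$. Expanding $f$, the term $-\leps v$ cancels on both sides, so $v$ is a genuine solution of \eqref{eq:v} with this value of $\eps\in[\eps_0,\eps_1]$. The plan is to obtain a uniform $C(\Omega)$ bound on such solutions in two parts: a bound on the oscillation $v-\bar v$, where $\bar v=|\Omega|^{-1}\int_\Omega v$, and a two-sided bound on the average $\bar v$.

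\textbf{Step 1: oscillation.} Since $\psi\le 0$ everywhere and $0\le e^t(e^t-1)^2\le 4/27$ for $t\le 0$, the right-hand side of \eqref{eq:v} satisfies
\[
\Big|\Delta v\Big|\le \frac{4}{27\eps_0^2}+\frac{4\pi N}{|\Omega|}=:C_1
\]
uniformly for $\eps\ge\eps_0$. By elliptic $L^p$ estimates on the torus, $\|v-\bar v\|_{W^{2,p}}\le C\|\Delta v\|_{L^p}$ for some $p>1$. Equivalently, one can use the Green representation $v(x)-\bar v=-\int_\Omega G(x,y)\Delta v(y)\,dy$ together with $G\in L^1$ uniformly in $x$. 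Either way, $\|v-\bar v\|_{L^\infty}\le C_2$ with $C_2$ independent of $\eps$ and $v$.

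\textbf{Step 2: upper bound on $\bar v$.} If $\bar v$ were very large, then $u_0+v\ge u_0+\bar v-C_2>0$ on most of $\Omega$. Here I use that $u_0\in L^1$ and that $u_0$ is bounded below away from $\Gamma$, tending to $-\infty$ only near the $p_j$. On the set where $u_0+v>0$ we have $\psi=0$, so the nonlinearity vanishes there. Integrating \eqref{eq:v} over $\Omega$ gives
\[
4\pi N=\frac{1}{\eps^2}\int_\Omega e^{\psi}(e^{\psi}-1)^2 \le \frac{4}{27\eps_0^2}\,\big|\{u_0+v\le 0\}\big|,
\]
and $\big|\{u_0+v\le0\}\big|\le\big|\{u_0\le C_2-\bar v\}\big|\to0$ as $\bar v\to\infty$. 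This contradicts $4\pi N>0$, so $\bar v\le C_3$.

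\textbf{Step 3: lower bound on $\bar v$.} This is the step I expect to be the main obstacle. If $\bar v\to-\infty$, then $u_0+v\to-\infty$ uniformly, so $\psi(u_0+v)\to-\infty$ and $e^{\psi}(e^{\psi}-1)^2\to0$ uniformly. The integral identity $4\pi N=\eps^{-2}\int_\Omega e^\psi(e^\psi-1)^2$, with $\eps\le\eps_1$, then forces a contradiction. To make this quantitative: for $u\le0$ the relation $u=1+\psi-e^\psi$ gives $\psi\le u$, so
\[
e^{\psi}(e^{\psi}-1)^2\le e^{\psi}\le e^{u_0+v}\le e^{u_0}e^{\bar v+C_2}.
\]
Since $e^{u_0}$ is bounded on $\Omega$,
\[
4\pi N\le \eps_0^{-2}\,e^{\bar v+C_2}\int_\Omega e^{u_0},
\]
which yields $\bar v\ge -C_4$. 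Combining Steps 1 through 3 gives $\|v\|_{C(\Omega)}\le \max(C_3,C_4)+C_2$, and $R_0$ can be taken to be this quantity plus $1$.
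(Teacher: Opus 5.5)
Your proof is correct. Steps 1 and 3 are essentially the paper's argument. The paper also bounds the oscillation $\tilde v_n=v_n-\mu_n$ in $W^{2,p}$ from a uniform bound on $\Delta\tilde v_n$. It also excludes $\mu_n\to-\infty$ through the integrated identity $4\pi N=\eps^{-2}\int_\Omega e^{\psi}(e^{\psi}-1)^2$, though by contradiction along a sequence rather than with your explicit constant.

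The genuine difference is Step 2. The paper gets the upper bound on the mean for free from a sign condition: it invokes Theorem A to assert $u_0+v_n<u_0+v_{1,\eps_n}<0$. Integrating over $\Omega$, with $\int_\Omega u_0=\int_\Omega\tilde v_n=0$, then gives $\mu_n\le 0$ at once. You use no sign information. Instead you use that the nonlinearity vanishes on $\{u_0+v>0\}$, so the integrated identity forces $|\{u_0+v\le 0\}|\ge 27\pi N\eps_0^2$. That is incompatible with a large mean, since $u_0\in L^1$.

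The paper's route is shorter, but as written it relies on Theorem A, which supplies $v_{1,\eps}$ only for $\eps<\eps_c$. The lemma, however, ranges over $\eps\in[\eps_0,\eps_1]$ with $\eps_1>\eps_*$. To close this one must show separately that every solution satisfies $u_0+v\le 0$, e.g.\ by the maximum principle on the open set $\{u_0+v>0\}$, where $u_0+v$ is harmonic and vanishes on the boundary. Your argument is self-contained and uniform in $\eps\ge\eps_0$. Your Step 1 likewise needs only $\psi\le 0$ rather than Theorem A.

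One small slip: the qualitative sentence in Step 3 should invoke $\eps\ge\eps_0$, not $\eps\le\eps_1$, since what is needed is $\eps^{-2}\le\eps_0^{-2}$. Your displayed estimate already uses the correct bound.
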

 \begin{proof}
    Suppose that  there exists a sequence $(\eps_n, v_n) \in  [\eps_0, \eps_1]\times \calZ$ such that
      \begin{align}
      \label{eq:vn}
       H(\eps_n, v_n)=0 \qand  \| v_n \|_\calZ \to \infty.
     \end{align}
   Let us decompose $v_n:= \tilde v_n + \mu_n$   with
    $\int_\Omega \tilde v_n =0$ and $\mu_n = |\Omega|^{-1} \int_\Omega v_n  $.
    Then,
      \begin{align}
      \label{eq:tilde_vn}
      \Delta \tilde v_n = - \f{1}{\eps_n^2} e^{\psi(u_0+v_n)}(e^{\psi(u_0+v_n)}-1)^2 + \f{4\pi N}{|\Omega|} .
      \end{align}
      By Theorem A, $u_0+v_n < u_0+v_{1, \vae_n}<0$ which implies 
     $\Delta \tilde v_n  \in  L^\infty(\Omega)$ uniformly. 
  So, we have $\tilde v_n \in W^{2,p}(\Omega)$ for $p>1$, which implies $\| \tilde v_n \|_\calZ \leq C$ for some $C>0$.
Integrating the inequality $u_0 + \tilde v_n + \mu_n \le 0$ , we see that $\mu_n \le 0$.
Hence, we conclude that $\mu_n \to -\infty$.
 It comes from \eqref{eq:tilde_vn} that
 \[ 0=   4\pi N -  \f{1}{\eps_n^2} \int_\Omega   e^{\psi(u_0+\tilv_n+\mu_n)}(e^{\psi(u_0+\tilv_n+\mu_n)}-1)^2 \to 4\pi N \]
 since $\psi (t)\to -\infty$ as $t \to -\infty$.
  Thus we complete the proof.
 \end{proof}

Let
$B_{R} =\{ v \in \calZ : \; \| v\|_\calZ < R \}$ for $R \ge R_0$ and
    $$\calY= \{ v \in \calZ : \; \phi < v < -u_0 \}$$
where $\phi$ is defined in Theorem A.

 \begin{lemma}
 \label{lem:B_R}
  $H(\eps, v)\ne 0$ for $(\eps, v) \in [\vae_0, \vae_1]\times \pa (B_R \cap \calY)$  for all $R\ge R_0$.
 \end{lemma}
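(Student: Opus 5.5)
We argue by contradiction and suppose $H(\eps,v)=0$ with $\eps\in[\vae_0,\vae_1]$ and $v\in\pa(B_R\cap\calY)$. Then $v=T_\eps(v)\in C^2(\Omega)$ solves \eqref{eq:v} for this $\eps$. Since \eqref{eq:v} has no solution for $\eps\ge\eps_*$ and none beyond $\eps_c$ within the class considered, we may take $\eps\le\eps_c$; for $\eps\in(\eps_*,\vae_1]$ there is simply nothing to exclude. By Lemma \ref{lem:H} we have $\|v\|_\calZ<R_0\le R$, so $v$ does not lie on $\pa B_R$. Moreover $v\in\overline{\calY}$, hence $\phi\le v\le -u_0$, and since $v\notin \pa B_R$ we must have $v\in\pa\calY$. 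In that case either $v$ touches $-u_0$ somewhere, or $v$ touches $\phi$ somewhere. The proof reduces to excluding these two contact cases by the strong maximum principle.

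\textbf{Upper contact.} Theorem A(i) gives $v<v_{1,\eps}<-u_0$ for $\eps<\eps_c$, by maximality. For $\eps=\eps_c$ we use $v\le$ the solution $\phi$, or directly the following argument. The difference $w=-u_0-v\ge0$ satisfies $\Delta w=-\Delta u_0-\Delta v$. Away from $\Gamma$ this equals $\f{4\pi N}{|\Omega|}-\Delta v=\f{1}{\eps^2}e^{\psi}(e^{\psi}-1)^2$. Near a point where $u_0+v=0$ we have $\psi=0$, so this quantity vanishes to second order in $\psi$. We therefore get a linear inequality $\Delta w\le c(x)w$ with bounded $c\ge0$, using $\psi'\cdot$ bounded, as in \eqref{eq:la-vae ineq}. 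Near $\Gamma$, $w\to+\infty$. The strong maximum principle then shows that $w$ cannot vanish unless $w\equiv0$, which is impossible because $-u_0$ is singular.

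\textbf{Lower contact.} Here $v\ge\phi$ with $v(x_0)=\phi(x_0)$. Set $w=v-\phi\ge0$. Using $\phi$ as the solution at $\eps_c$, we compute
\[
\Delta w=g(\eps,v)-g(\eps_c,\phi)=\big(g(\eps,v)-g(\eps,\phi)\big)+\big(g(\eps,\phi)-g(\eps_c,\phi)\big).
\]
The first bracket equals $c(x)w$ with $|c|\le \la_\eps$ by the mean value theorem and \eqref{eq:la-vae ineq}. The second bracket has the sign of $-(\eps^{-2}-\eps_c^{-2})$ times a positive quantity, which is $\le0$ when $\eps\le\eps_c$. Hence $\Delta w-\la_\eps w\le0$ after absorbing, with $w\ge0$. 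The strong maximum principle forces $w\equiv0$, and then the second bracket must vanish identically, which forces $\eps=\eps_c$. This is the main obstacle: the remaining case is $\eps=\eps_c$ with $v=\phi$, and by assumption $\phi\notin\calY$ is not in the open set. Since $\calY$ is defined with strict inequalities and $\phi$ lies on its boundary, we must check that $\vae_c$ lies outside the relevant range or argue separately.

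To handle this case, I would first verify whether the degree argument only needs $\eps$ in a range avoiding $\eps_c$. If it does not, I would replace $\phi$ by $v_{1,\eps'}$ for some $\eps'\in(\vae_0,\eps_c)$, or I would note that the homotopy passing through $\eps_c$ meets the boundary only at $\phi$. The fallback is to restrict the lemma to $\eps\in[\vae_0,\eps_c)$, where the strict comparison $w>0$ holds, and to treat $[\eps_c,\vae_1]$ via nonexistence for $\eps>\eps_c$.
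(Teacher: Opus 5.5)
The obstacle you isolate at $\eps=\eps_c$ cannot be patched, because it is a counterexample to the lemma as literally stated. By Theorem A(ii), $H(\eps_c,\phi)=0$. Since $\phi\le v_{1,\eps}<-u_0$ and $-u_0-v_{1,\eps}\to+\infty$ at $\Gamma$, one has $\phi+\delta\in\calY$ for small $\delta>0$, while $\phi\notin\calY$. Hence $\phi\in\pa(B_R\cap\calY)$ as soon as $R>\|\phi\|_\calZ$. None of your fallbacks changes this. Replacing $\phi$ by $v_{1,\eps'}$ proves a statement about a different set. The nonexistence for $\eps\in(\eps_c,\eps_*)$, on which both your opening reduction to $\eps\le\eps_c$ and your fallback rely, is not part of Theorem A as stated, and it is irrelevant at $\eps=\eps_c$ itself.

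The paper's proof in fact only treats $\eps=\eps_0$ (it begins from $H(\eps_0,v)=0$), and that is all the proof of Theorem \ref{thm:main} uses. The degree on $B_R\cap\calY$ is computed only at $\eps_0$, via the homotopy $J$ in $t$. The continuation from $\eps_0$ to $\eps_1$ takes place on $B_R$, where Lemma \ref{lem:H} alone keeps zeros off the boundary. So your first suggestion is the correct resolution: read the lemma at $\eps=\eps_0$ (or any fixed $\eps\in[\eps_0,\eps_c)$), and your argument then proves it completely.

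For that version, your route differs from the paper's in both contact cases.

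\textbf{Upper contact.} The paper appeals to maximality, $v<v_{1,\eps_0}<-u_0$. You instead use $g(\eps,-u_0)=0$ and the bound \eqref{eq:la-vae ineq} to get $\Delta w\le c\,w$ with $0\le c\le 4/\eps^2$ for $w=-u_0-v$, and then apply the strong maximum principle on the connected set $\Omega\setminus\Gamma$. This does not use Theorem A and works for every $\eps$. Note that what is bounded is $g_v$, not $\psi'$: the latter equals $1/(1-e^{\psi})$ and blows up at $u=0$.

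\textbf{Lower contact.} The paper evaluates $\Delta(v-\phi)$ at the touching point $x_1$ and uses the strict inequality \eqref{eq:phi ineq}. That strictness is lost if $x_1\in\Gamma$, where $e^{\psi(u_0+\phi)}=0$ and the two sides of \eqref{eq:phi ineq} coincide, so the pointwise argument gives no contradiction there. Your version runs as follows: $\Delta w-\leps w\le 0$ with $w=v-\phi\ge0$ and $w(x_0)=0$, hence $w\equiv0$. Then $(\eps^{-2}-\eps_c^{-2})\,e^{\psi(u_0+\phi)}(e^{\psi(u_0+\phi)}-1)^2$ would have to vanish on $\Omega\setminus\Gamma$, which is impossible for $\eps<\eps_c$. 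This handles the case $x_1\in\Gamma$ too, and is the more robust argument.
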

 \begin{proof}
  Suppose that $H(\vae_0, v)=0$ for some    $v \in \partial(B_R \cap Y) \subset \partial B_R \cup \partial Y$.
  By Lemma \ref{lem:H}, it suffices to consider     the case of $v \in \partial Y$.
  Then, one of the following is true:
  \begin{itemize}
  \item[{(i)}] there exists  $ x_0 \in \Omega \setminus \Gamma$ such that 
  $ v(x_0)=-u_0(x)$ and $v \le  -u_0$ on $\Omega \setminus \Gamma$;
  \item[{(ii)}] there exists $x_1 \in \Omega$ such that $v(x_1)=\phi(x_1)$ and $v \geq \phi$ on $\Omega$.
  \end{itemize}  
  Since $H(\vae_0, v)=0$,  $v$ satisfies $\eqref{eq:v}_{\vae=\vae_0}$.
   So, $v<v_{1, \vae_0}<-u_0$ by Theorem A which implies that (i) is excluded. 
 Suppose that (ii) is true.
  Since $\vae_0 < \vae_c$, we have
  \begin{equation}
  \label{eq:phi ineq}
   \begin{aligned}
      \Delta \phi & = - \f{1}{\varepsilon_c^2} e^{\psi(u_0+\phi)}(e^{\psi(u_0+\phi)}-1)^2 + \f{4\pi N}{|\Omega|} \\
                       & >   - \f{1}{\varepsilon_0^2} e^{\psi(u_0+\phi)}(e^{\psi(u_0+\phi)}-1)^2+ \f{4\pi N}{|\Omega|}
                           = g(\eps_0, \phi)  + \f{4\pi N}{|\Omega|}.
 \end{aligned}
 \end{equation}
  Thus, by the   maximum principle, we are led to a contradiction:
  \begin{align*}
0 & \leq  \Delta  (v-\phi) (x_1) <  g \big(\eps_0, v (x_1) \big) - g\big(\eps_0, \phi (x_1) \big)  =0.
\qedhere
  \end{align*}
 \end{proof}

By Lemma \ref{lem:B_R},   the Leray-Schauder degree 
$ \deg(H(\eps_0, \cdot), B_R \cap \calY, 0)$ is well-defined for $R > R_0$.
We want to show that  $\deg (H(\eps_0, \cdot),  B_R \cap \calY, 0)=1$.
For this purpose, we will define a new homotopy as follows.
 Given $(t,v) \in [0,1]\times \calZ$, let us consider the following equation
  \[ (\Delta - \lambda_{\eps_0})w = t f(\eps_0, v) + (1-t) f(\eps_0, v_{1, \eps_0}) \qon \Omega. \]
Then we can define  a new compact operator $K_t: \calZ \to \calZ$ by
 \[ w=K_t (v):= (\Delta - \lambda_{\eps_0})^{-1} \Big(t f(\eps_0, v) + (1-t) f(\eps_0, v_{1, \eps_0})\Big).\]
We also define a new homotopy
$J: [0,1] \times \calZ \to \calZ$ by $J(t,v)=v-K_t (v)$.

\begin{lemma}
\label{lem:J not zero}
$J(t,v) \ne 0$ for  $(t,v) \in [0,1] \times   \partial \calY$.
\end{lemma}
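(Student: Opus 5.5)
We argue by contradiction, in the same way as in Lemma \ref{lem:B_R}. Suppose $J(t,v)=0$ for some $t\in[0,1]$ and $v\in\partial\calY$. Then $v=K_t(v)$, which means
\[
\Delta v-\lambda_{\eps_0}v = t f(\eps_0,v)+(1-t)f(\eps_0,v_{1,\eps_0}).
\]
Since $v\in\partial\calY$, we have $\phi\le v\le -u_0$, so $u_0+v\le 0$ on $\Omega\setminus\Gamma$. As before, one of two cases holds: (i) $v$ touches $-u_0$ at some point $x_0\in\Omega\setminus\Gamma$, or (ii) $v$ touches $\phi$ at some point $x_1$. The idea is to compare $v$ with the two barriers $v_{1,\eps_0}$ and $\phi$. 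Both satisfy strict differential inequalities with respect to the interpolated operator, and the choice $\lambda_{\eps_0}>4/\eps_0^2$ makes the map $s\mapsto f(\eps_0,s)$ strictly decreasing on the region $u_0+s\le 0$, by \eqref{eq:la-vae ineq}.

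\textbf{Case (ii).} Write $w=v-\phi\ge 0$ with $w(x_1)=0$, so $\Delta w(x_1)\ge 0$. From \eqref{eq:phi ineq} and $\Delta\phi-\lambda_{\eps_0}\phi = \Delta\phi-\lambda_{\eps_0}\phi$, we get $\Delta\phi-\lambda_{\eps_0}\phi> f(\eps_0,\phi)$. Also $\phi<v_{1,\eps_0}$, since $\phi$ is the infimum of the decreasing family; strictness follows from the strong maximum principle, or we can use $\phi\le v_{1,\eps}<v_{1,\eps_0}$ for $\eps\in(\eps_0,\eps_c)$. Monotonicity of $f$ then gives $f(\eps_0,v_{1,\eps_0})<f(\eps_0,\phi)$, hence
\[
t f(\eps_0,\phi)+(1-t)f(\eps_0,v_{1,\eps_0})\le f(\eps_0,\phi).
\]
At $x_1$ we have $v=\phi$, so
\[
\Delta w(x_1)=t f(\eps_0,\phi)(x_1)+(1-t)f(\eps_0,v_{1,\eps_0})(x_1)-\bigl(\Delta\phi-\lambda_{\eps_0}\phi\bigr)(x_1)<0,
\]
which is a contradiction. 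Some care is needed at $x_1\in\Gamma$, where $u_0=-\infty$. There $\psi(u_0+\cdot)$ is $-\infty$ and both nonlinear terms vanish, but the strict inequality in \eqref{eq:phi ineq} must still be justified. To avoid this, I would instead use the strong maximum principle: $w\ge 0$ satisfies $\Delta w-c(x)w\le 0$ with $c$ bounded, where $c$ comes from the mean value theorem and the bound on $g_v$. Hence $w\equiv 0$ or $w>0$. The case $w\equiv 0$ is ruled out because $\phi$ would then solve the equation with the strict inequality, which is impossible, and also because $t\in[0,1)$ would give a strict discrepancy.

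\textbf{Case (i).} Here I would first show $v\le v_{1,\eps_0}$, and then $v<v_{1,\eps_0}<-u_0$, which excludes touching $-u_0$. Set $z=v-v_{1,\eps_0}$. Then
\[
(\Delta-\lambda_{\eps_0})z = t\bigl(f(\eps_0,v)-f(\eps_0,v_{1,\eps_0})\bigr).
\]
At a positive maximum of $z$, the left side is $\le -\lambda_{\eps_0}z<0$, while the right side is $\ge 0$ because $f$ is decreasing and $v>v_{1,\eps_0}$ there. This contradiction shows $z\le 0$. Strictness follows from the strong maximum principle, or from $v_{1,\eps_0}<-u_0$ directly: since $v\le v_{1,\eps_0}<-u_0$ on $\Omega\setminus\Gamma$, we cannot have $v(x_0)=-u_0(x_0)$.

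The main obstacle is handling the points of $\Gamma$ and obtaining strictness in case (ii). I expect to resolve this with the strong maximum principle applied to the linearized inequality, using the uniform bound $|g_v|\le 4/\eps_0^2$.
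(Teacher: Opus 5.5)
\textbf{There is a genuine gap in your case (i), where $v$ touches $-u_0$.} Your case (ii), where $v$ touches $\phi$, is essentially the paper's argument: you use monotonicity of $f$ where the paper uses the mean value theorem. Your concern about $x_1\in\Gamma$ is also legitimate. There the nonlinear term vanishes and \eqref{eq:phi ineq} becomes an equality, which the paper passes over, and your strong maximum principle argument handles it.

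The problem in case (i) is the sign in your maximum principle argument. At a positive maximum $y$ of $z=v-v_{1,\eps_0}$ you have $v(y)>v_{1,\eps_0}(y)$. Since $s\mapsto f(\eps_0,s)$ is decreasing, this gives $f(\eps_0,v)(y)<f(\eps_0,v_{1,\eps_0})(y)$. So the right-hand side $t\bigl(f(\eps_0,v)-f(\eps_0,v_{1,\eps_0})\bigr)(y)$ is $\le 0$, not $\ge 0$. Both sides are then negative and there is no contradiction.

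Written out correctly, $\Delta z(y)=\bigl[(1-t)\lambda_{\eps_0}+t\,g_v(\eps_0,\xi)\bigr]z(y)$. Since $g_v=\eps_0^{-2}e^{\psi}(3e^{\psi}-1)$ takes values down to $-1/(12\eps_0^2)$, the bracket can be negative when $t$ is close to $1$. At $t=1$ the claim $v\le v_{1,\eps_0}$ is exactly the maximality statement of Theorem A. That statement rests on the monotone-iteration construction of $v_{1,\eps_0}$, not on a pointwise maximum principle. So your exclusion of touching $-u_0$ is not established.

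\textbf{The fix.} Compare $v$ with $-u_0$ itself rather than with $v_{1,\eps_0}$, as the paper does. On $\Omega\setminus\Gamma$ you have $\Delta(-u_0)=4\pi N/|\Omega|$ and $g(\eps_0,-u_0)=0$, so $(\Delta-\lambda_{\eps_0})(-u_0)=f(\eps_0,-u_0)$ exactly. At the touching point $x_0$ the $t$-term $t\bigl[f(\eps_0,v)-f(\eps_0,-u_0)\bigr](x_0)$ vanishes, so
\[
0\ge(\Delta-\lambda_{\eps_0})(v+u_0)(x_0)=(1-t)\bigl[f(\eps_0,v_{1,\eps_0})-f(\eps_0,-u_0)\bigr](x_0)>0
\]
for $t<1$. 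The strict inequality holds because $v_{1,\eps_0}<-u_0$ and $f$ is strictly decreasing on $\{u_0+s\le 0\}$.

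For $t=1$, $v$ solves \eqref{eq:v} with $\eps=\eps_0$, so Theorem A gives $v<v_{1,\eps_0}<-u_0$ directly. The paper's displayed computation degenerates to $0\ge 0$ at $t=1$, so this appeal to Theorem A is needed there as well.
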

\begin{proof}
Suppose that $J(t_0, v_0)=0$ for some   $t_0 \in [0,1]$  and $v_0 \in \partial Y$.
Then
\begin{align}
 \label{eq:v0}
 (\Delta - \lambda_{\eps_0})v_0 = t_0 f(\eps_0, v_0)+ (1-t_0) f(\eps_0, v_{1, \eps_0}).
\end{align}
Since $v_0 \in \pa \calY$, either
  \[ \text{(i) $\phi(x_1) = v_0 (x_1)$ for some $x_1$ and $v_0 \ge  \phi$ on $\Omega  $,}\]
   or
   \[ \text{(ii) $-u_0(x_2) = v_0 (x_2)$ for some $x_2 \ne p_j$ and $v_0 \leq -u_0$ on $\Omega\setminus \Gamma  $.}
   \]
   First, assume the case of (i).
By \eqref{eq:phi ineq},
 \begin{align}
 \label{eq:phi_c}
  (\Delta - \lambda_{\eps_0}) \phi >  f(\eps_0, \phi) = t_0  f(\eps_0, \phi) + (1-t_0)  f(\eps_0, \phi).
 \end{align}
 Using \eqref{eq:v0} and \eqref{eq:phi_c} at $x_1$ and the inequality \eqref{eq:la-vae ineq}, 
 we obtain by the Mean Value Theorem that 
 \begin{align*}
   0  & \leq (\Delta - \lambda_{\eps_0}) (v_0-\phi)(x_1) \\
       & < t_0 \Big[ g(\eps_0, v_0) - g(\eps_0, \phi) - \lambda_{\eps_0} (v_0-\phi) \Big](x_1) \\
       &     \quad    +  (1-t_0) \Big[ g(\eps_0, v_{1, \eps_0}) - g(\eps_0, \phi) - \lambda_{\eps_0} ( v_{1, \eps_0}-\phi) \Big](x_1) \\
       & = t_0 \Big[ g_v\big(\eps_0, \xi_1(x_1)\big) - \lambda_{\eps_0} \Big] \big(v_0 (x_1)- \phi(x_1)\big)\\
      & \quad       + (1-t_0)  \Big[ g_v(\eps_0, \xi_2(x_1)) - \lambda_{\eps_0} \Big] (v_{1, \eps_0}(x_1) - \phi (x_1))    \\
       & =    (1-t_0)  \big( g_v(\eps_0, \xi_2(x_1)) - \lambda_{\eps_0} \big) \big(v_{1, \eps_0}(x_1) - \phi (x_1)\big) < 0,
 \end{align*}
which is a contradiction.
Here $\phi < \xi_1 < v_0$ and $\phi < \xi_2 < v_{1, \eps_0}$.

Second, we consider the case of (ii).
Similarly to the first case, we see that
 \begin{align*}
 0  & \geq (\Delta - \lambda_{\eps_0}) (v_0 + u_0)(x_2) \\
     & = t_0 \Big[ g(\eps_0, v_0) - g(\eps_0, -u_0) - \lambda_{\eps_0} (v_0 + u_0) \Big] (x_2)\\
     &  \quad      +  (1-t_0) \Big[ g(\eps_0,  v_{1, \eps_0}) - g(\eps_0, -u_0) - \lambda_{\eps_0} ( v_{1, \eps_0} + u_0) \Big] (x_2)\\
     & =  (1-t_0)  \Big[ g_v\big(\eps_0, \eta(x_2)\big) - \lambda_{\eps_0} \Big] (v_{1, \eps_0}(x_2) + u_0(x_2))  > 0
 \end{align*}
  for some $v_0 <\eta <-u_0$.
 This   completes the proof.
\end{proof}

\begin{lemma}
\label{lem:deg}
 There exists $R_1>0$ so that $\| v \|_{\calZ} \leq R_1$ provided $J(t,v) = 0$ for $(t,v) \in [0,1] \times  \calY$.
\end{lemma}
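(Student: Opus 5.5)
The bound will come almost directly from the order constraint $\phi < v < -u_0$ that defines $\calY$, combined with elliptic regularity for $(\Delta-\lambda_{\eps_0})^{-1}$. Once $t f(\eps_0, v) + (1-t) f(\eps_0, v_{1,\eps_0})$ is bounded in $L^\infty(\Omega)$ uniformly in $t$ and in $v\in\calY$, the bound on $v = K_t(v)$ in $\calZ$ follows. No blow-up or contradiction argument as in Lemma \ref{lem:H} should be needed.

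\textbf{Step 1: pointwise bounds on $v$.} Suppose $J(t,v)=0$ with $v \in \calY$. Then $\phi < v < -u_0$ on $\Omega\setminus\Gamma$. The lower bound $\phi$ is a fixed solution of $\eqref{eq:v}_{\vae=\vae_c}$, hence continuous and bounded below by $-\|\phi\|_\calZ$. The upper function $-u_0 = 4\pi\sum_j n_j G(\cdot,p_j)$ tends to $+\infty$ at the vortex points, so it is not bounded above. I therefore do not try to bound $v$ from above by $-u_0$. Instead I bound the right-hand side:
\begin{itemize}
\item Since $u_0+v \le 0$ off $\Gamma$, we have $\psi(u_0+v) \le 0$, so $0 \le e^{\psi}(1-e^{\psi})^2 \le 4/27$ and $|g(\eps_0, v)| \le 4/(27\eps_0^2)$.
\item The same bound holds for $g(\eps_0, v_{1,\eps_0})$, because $v_{1,\eps_0} < -u_0$ by Theorem A.
\item $v_{1,\eps_0}$ is a fixed continuous function, so $\lambda_{\eps_0} v_{1,\eps_0}$ is bounded.
\end{itemize}
The only remaining term of concern is $-t\lambda_{\eps_0} v$.

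\textbf{Step 2: move the linear term to the left.} Rewrite $J(t,v)=0$ as
\[
(\Delta - (1-t)\lambda_{\eps_0}) v = t\, g(\eps_0,v) + (1-t)\big(g(\eps_0,v_{1,\eps_0}) - \lambda_{\eps_0} v_{1,\eps_0}\big) + \tfrac{4\pi N}{|\Omega|} =: h_t .
\]
By Step 1, $\|h_t\|_{L^\infty} \le C_0$ with $C_0$ independent of $t$ and $v$.

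\textbf{Step 3: case analysis in $t$ (main obstacle).} The difficulty is that the operator $\Delta-(1-t)\lambda_{\eps_0}$ degenerates to $\Delta$ as $t\to1$, so a uniform inverse bound fails there. I handle this by splitting $v = \tilv + \mu$ with $\int_\Omega \tilv = 0$, exactly as in Lemma \ref{lem:H}.
\begin{itemize}
\item \emph{Oscillating part.} We have $\Delta\tilv = h_t + (1-t)\lambda_{\eps_0}(\tilv+\mu)$. First bound $(1-t)\lambda_{\eps_0}|\mu|$: integrating the equation gives $(1-t)\lambda_{\eps_0}\mu|\Omega| = -\int_\Omega h_t$, so $(1-t)\lambda_{\eps_0}|\mu| \le C_0$. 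Next, at the maximum and minimum points of $v$ the maximum principle gives $(1-t)\lambda_{\eps_0}|v| \le C_0$ there, and hence $(1-t)\lambda_{\eps_0}\|v\|_\infty \le C_0$. Consequently $\|\Delta\tilv\|_{L^\infty} \le 2C_0$. $W^{2,p}$ estimates together with Sobolev embedding then give $\|\tilv\|_\calZ \le C$ uniformly.
\item \emph{Mean.} From below, $v>\phi$ gives $\mu > \min\phi - C$. From above, integrating $u_0+\tilv+\mu \le 0$ over $\Omega$ and using $\int_\Omega u_0 = 0$ gives $\mu \le -|\Omega|^{-1}\int_\Omega\tilv = 0$.
\end{itemize}
Combining the two parts, $\|v\|_\calZ \le \|\tilv\|_\calZ + |\mu| \le R_1$ for a constant $R_1$ independent of $t\in[0,1]$. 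The key point is that the lower barrier $\phi$ supplies the uniform bound on the mean that the degenerate operator cannot provide.
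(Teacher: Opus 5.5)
Your proof is correct. It shares the paper's skeleton: split $v=\tilde v+\mu$, control $\tilde v$ by elliptic regularity, and control $\mu$ through the barriers defining $\calY$. The central estimate, however, is obtained differently.

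\textbf{The paper's route.} It multiplies the equation by $-\tilde v$ and uses Poincar\'e to bound $\int_\Omega|\nabla\tilde v|^2$. It integrates to bound $(1-t)\lambda_{\eps_0}|\mu|$, deduces $\Delta\tilde v\in L^2$, and hence $\|\tilde v\|_\infty\le C$ via $W^{2,2}$. It then bounds $\mu$ from both sides at once by evaluating at a single regular point $x_*$, where $\phi(x_*)<v(x_*)<-u_0(x_*)$.

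\textbf{Your route.} You move $-t\lambda_{\eps_0}v$ to the left and note that $h_t$ is bounded independently of $v$ and $t$. The maximum principle at the extremal points of $v$ then gives $(1-t)\lambda_{\eps_0}\|v\|_\infty\le C_0$. This puts $\Delta\tilde v$ in $L^\infty$ directly, so your separate integral bound on $(1-t)\lambda_{\eps_0}\mu$ is redundant. For the mean, your upper bound $\mu\le 0$ reuses the integration argument of Lemma \ref{lem:H}.

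\textbf{What your version buys.} It treats the linear term explicitly. The paper asserts that $f(\eps_0,v)$ is uniformly bounded, which is not justified a priori, since $f$ contains $-\lambda_{\eps_0}v$. In the paper's energy identity that term only contributes $t\lambda_{\eps_0}\int_\Omega\tilde v^2$, which can be absorbed into the left side, so the paper's argument is repairable. Your rewriting with $h_t$ avoids the issue entirely.

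\textbf{What the paper's route buys.} It needs only integration by parts. Your pointwise maximum principle needs enough regularity of $v$ at its extremal points, which follows by bootstrapping from $v=K_t(v)$. The paper uses the same kind of pointwise argument in Lemmas \ref{lem:B_R} and \ref{lem:J not zero}, so this is not a real gap.
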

\begin{proof}
  From  $J(t,v) = 0$, it follows that
  \begin{equation}
  \label{eq:v2}
  \begin{aligned}
  \Delta v - \lambda_{\eps_0} v& = t f(\eps_0, v)+(1-t) f(\eps_0, v_{1, \eps_0})\\
  &=t g(\eps_0, v) - t\la_{\eps_0} v + \f{4t\pi N}{|\Omega|}+(1-t) f(\eps_0, v_{1, \eps_0}).
  \end{aligned}
  \end{equation}
    Let $v=\tilde v + \mu$ where
    $\int_\Omega \tilde v =0$ and $\mu = |\Omega|^{-1}\int_\Omega v $.
  By the property of $\psi$, it holds that $f\big (\vae_0, v(x) \big) $ is uniformly bounded on $\Omega$.
  Thus, by multiplying \eqref{eq:v2} by $-\tilde v$, we obtain 
   \begin{align*}
     \int_\Omega |\nabla \tilde v|^2 + \lambda_{\eps_0} \int_\Omega |\tilde v|^2
     &  =   -\int_\Omega t \ f(\eps_0, v) \tilde v  - \int_\Omega (1-t) f(\eps_0, v_{1, \eps_0}) \tilde v \\
     & \leq C \int_\Omega |\tilde v| \leq C \Big(\int_\Omega |\nabla \tilde v|^2 \Big)^{1/2}
   \end{align*}
  by the Poincar\'{e} inequality. Then $\int_\Omega |\nabla \tilde v|^2 \leq C$.
  Furthermore, integration of \eqref{eq:v2} yields that
   \[ -\lambda_{\eps_0} \mu |\Omega| =  4\pi N t - \lambda_{\eps_0} \mu |\Omega| t + t \int_\Omega g(\eps_0, v)
    + (1-t)  \int_\Omega f(\eps_0, v_{1, \eps_0}),    \]
  which gives
   \[ \big| \lambda_{\eps_0} \mu (1-t)  \big| \leq C. \]
   Then we see that $\Delta \tilde v \in L^2(\Omega)$ from \eqref{eq:v2}.
 So, by the elliptic regularity, we have $\| \tilde v \|_\infty < C$.
   Taking a nonsingular point $x_* \ne p_j$, we see  the finiteness of $\mu$ since
    \[ |\mu| \leq |v(x_*)| + |\tilde v(x_*)| \leq  |\tilde v(x_*)| + |u_0(x_*)| + |\phi (x_*)| \leq C.
    \qedhere \]
\end{proof}

\noindent
{\bf Proof of  Theorem \ref{thm:main}:} \;
Let us choose $R \geq \max\{ R_0, R_1\}$.
In accordance with Lemma \ref{lem:J not zero} and Lemma \ref{lem:deg}, $J(t,v) \ne 0$ 
on $\partial (B_R \cap \calY) \subset (\partial  B_R \cap \calY) \cup  (  B_R \cap \partial \calY) $  for all $t \in [0,1]$. 
 Thus the degree of $J$ is well-defined on $B_R \cap \calY$.
Since $v_{1, \eps_0}$ is the unique solution of $J(0, v)=0$,
we conclude that
    \[ \deg(H(\eps_0, \cdot), B_R \cap \calY, 0) =  \deg(J(1, \cdot), B_R \cap \calY, 0) = \deg(J(0, \cdot), B_R \cap \calY, 0)=1 \]
 by the homotopy invariance property of degree.
On the other hand, we see that $ \deg(H(\eps_1, \cdot), B_R, 0) =0$ 
since there is no solution of \eqref{eq:v} by \eqref{eq:eps *}.
Using these identities, we deduce that
  \begin{align*}
     0 & = \deg(H(\eps_1, \cdot), B_R, 0) = \deg (H(\eps_0, \cdot), B_R, 0)  \\
        & =  \deg(H(\eps_0, \cdot), B_R \cap \calY, 0) +  \deg(H(\eps_0, \cdot), B_R \setminus \calY, 0)\\
        &= 1+   \deg(H(\eps_0, \cdot), B_R \setminus \calY, 0) .
  \end{align*}
 Thus  $\deg(H(\eps_0, \cdot), B_R \setminus \calY, 0)=-1$, which implies that
 there exists a solution $v_{2, \vae_0}$ on $B_R \setminus \calY$ for $H(\eps_0, v)=v-T_{\eps_0}(v)=0$.
 Hence we conclude that $v_{2, \vae_0}$ is a solution of $\eqref{eq:v}_{\vae=\vae_0}$.
 Since $v_{1, \vae_0} \in \calY$, it is obvious that $v_{1, \vae_0} \ne v_{2, \vae_0}$.
 This finishes the proof of  Theorem \ref{thm:main}.

%


\bigskip


\begin{thebibliography}{1}

\bibitem{BCT}
J. Burzlaff, A. Chakrabarti and D.H. Tchrakian,
Generalized self-dual Chern-Simons vortices,
Phys. Lett. B 293 (1992) 127-131.

\bibitem{CI00} D. Chae and O. Y. Imanuvilov,
         The existence of non-topological multivortex solutions in the relativistic
         self-dual Chern-Simons theory,
         Comm. Math. Phys. 215 (2000),  119-142.

\bibitem{cy}
L. Caffarelli, Y. Yang,
Vortex condensation in the Chern-Simons Higgs model: An existence theorem,
Comm. Math. Phys  168 (1995), 321-336.

\bibitem{CFL}
H. Chan, C.-C. Fu, and C.-S. Lin,
Non-topological multi-vortex solutions to  the self-dual Chern-Simons-Higgs equation,
Comm. Math. Phys.  231 (2002), 189-221.

\bibitem{Ch05JMP}
K. Choe,
Uniqueness of the topological multivortex solution in the self-dual Chern-Simons theory
J. Math. Phys. 46 (2005), 012035.

\bibitem{CK08}
K. Choe and N. Kim,
Blow-up solutions of the self-dual Chern-Simons-Higgs vortex equation,
Ann. Inst. H.P. Anal. Non Lin\'{e}aire 25 (2008), 318-338.



\bibitem{HaLi}
J. Han and C.-S. Lin,
Multiplicity for self-dual condensate solutions in the Maxwell-Chern-Simons O(3) sigma model, 
Comm  PDE 39  (2014), 1424-1450.

\bibitem{HS2}
J. Han and J. Sohn,
On the self-dual Einstein-Maxwell-Higgs equation on compact surfaces,
Disc. Cont. Dyn. Syst.    39 (2019), 819-839.

\bibitem{HKP}
J. Hong, Y. Kim, and P. Y. Pac,
Multivortex Solutions of  the Abelian Chern-Simons-Higgs Theory,
Phys. Rev. Lett.  64  (1990), 2230-2233.

\bibitem{JP}
R. Jackiw and S.-Y. Pi,
Soliton solutions to the    gauged nonlinear Schr\"{o}dinger equations on the plane,
Phys. Rev. Lett.  64 (1990),    2969-2972.

\bibitem{LY13}
C.-S. Lin and S. Yan,
Existence of Bubbling Solutions for Chern-Simons Model on a Torus,
Arch. Rat. Meach. Anal. (2013), 353-392.

\bibitem{XH13}
X. Han,
The existence of multi-vortices for a generalized self-dual Chern–Simons model,
Nonlinearity 26 (2013), 805-835.

\bibitem{xh}
X. Han,
Existence of doubly periodic vortices in a generalized Chern-Simons model,
Nonlin. Anal. Real World Appl. 16 (2014), 90-102.

\bibitem{ta}
 G. Tarantello,
 Multiple condensate solutions for the Chern-Simons-Higgs theory,
 J. Math. Phys 37 (8), 1996, 3769-3796.

 \bibitem{ta07}
 G. Tarantello,
 Uniqueness of selfdual periodic Chern-Simons vortices of topological-type,
Calv. Var. 29 (2007), 191-217.

\bibitem{ta-book} G. Tarantello,
         Selfdual Gauge Field Vortices,
         Progress in Nonlinear Differential   Equations and their applications Vol 72,
         Birkhauser, 2008.


\end{thebibliography}
\end{document}